\numberwithin{equation}{section}
\theoremstyle{plain}
\newtheorem{thm}{Theorem}[section]
\newtheorem{lemma}[thm]{Lemma}
\theoremstyle{definition}
\newtheorem{dfn}[thm]{Definition}
\theoremstyle{remark}
\newtheorem{rem}[thm]{Remark}
\newcommand{\C}{\mathbb C}
\newcommand{\R}{\mathbb R}
\newcommand{\Z}{\mathbb Z}
\def\dim{\operatorname{dim}}
\def\grad{\operatorname{grad}}
\def\min{\operatorname{min}}
\def\supp{\operatorname{supp}}
\def\EDdeg{\operatorname{EDdeg}}
\def\ees{{\accent"5E e}\kern-.385em\raise.2ex\hbox{\char'23}\kern-.08em}
\def\EES{{\accent"5E E}\kern-.5em\raise.8ex\hbox{\char'23 }}
\def\ow{o\kern-.42em\raise.82ex\hbox{
\vrule width .12em height .0ex depth .075ex \kern-0.16em \char'56}\kern-.07em}
\def\OW{O\kern-.460em\raise1.36ex\hbox{
\vrule width .13em height .0ex depth .075ex \kern-0.16em \char'56}\kern-.07em}
\def\DD{D\kern-.7em\raise0.4ex\hbox{\char '55}\kern.33em}
\begin{document}
\title[Euclidean distance degree of complete intersections]{Euclidean distance degree of complete intersections via Newton polytopes}

\author[T. T. Nguyen]{Nguyen Tat Thang}
\address{Institute of Mathematics, Vietnam Academy of Science and Technology, 18 Hoang Quoc Viet road, Cau Giay district, Hanoi, Vietnam}
\email{ntthang@math.ac.vn}

\author[T. T. Pham]{Pham Thu Thuy}
\address{Department of Mathematics - Faculty of Fundamental Sciences,
Posts and Telecommunications Institute of Technology,
Km10 Nguyen Trai Road, Ha Dong District, Hanoi, Vietnam}
\email{thuyphamun@gmail.com}

\keywords{Euclidean distance degree, Newton polytopes, Mixed volume, Critical point, Tangent space}

\begin{abstract}
		In this note, we consider a complete intersection $X=\{x\in \mathbb{R}^n : f_1(x)= \ldots = f_m(x)=0\}, n>m$ and study its Euclidean distance degree in terms of the mixed volume of the Newton polytopes. We show that if the Newton polytopes of $f_j,j=1,\ldots, m$ contain the origin then when these polynomials are generic with respect to their Newton polytopes, the Euclidean distance degree of $X$ can be computed in terms of the mixed volume of Newton polytopes associated to $f_j$.  This is a generalization for the result by P. Breiding, F. Sottile and J. Woodcock in case $m=1$.
\end{abstract}
\maketitle	

\section{Introduction}

Many models in data science or mechanical engineering can be represented as a real algebraic set, leading to the need to solve the problem of finding the nearest point.

Nearest point problem: In $\mathbb{R}^n$ given the algebraic set $\mathrm{X}$ and a point $u$, find a point $u^*$ of $\mathrm{X}$ that minimizes the squared Euclidean distance function $d_u(x)=\Sigma\left(x_i- u_i\right)^2$ from the given point $u$.

That optimization problem arises
in many applications. For instance, if $u$ is a noisy sample from $X$, where the error model is a standard Gaussian in $\mathbb{R}^n$ then $u^*$
is the maximum likelihood estimate for $u$. Also, in \cite{S} Seidenberg  observed that if $X$ is nonempty, then it contains a solution to the problem above. He
used this observation in an algorithm for deciding if $X$ is empty.

One approach to the "nearest point problem" is to find and examine all complex critical points of $d_u$. More precisely, we consider the complexification $X_{\C}$ of $X$ (i.e. the set of complex solutions in $\mathbb{C}^n$ of the defining equations of $X$) and examine
all complex critical points of the squared distance function $d_u(x) = \Sigma\left(x_i- u_i\right)^2$ on $X_{\C}$ which are not singular points of $X_{\C}$. The number of such critical points are constant on a dense subset of $u\in \mathbb{R}^n$ and is called the Euclidean distance degree of the set $\mathrm{X}$, denoted by $\mathrm{EDdeg}(\mathrm{X})$.

The Euclidean distance degree of $X$ gives us an algebraic quantity that evaluates the complexity of the nearest point problem for $X$.  This notion was introduced in \cite{DHOST}, and has since been extensively studied in areas like computer vision, biology, chemical reaction networks, engineering, numerical algebraic geometry, data science, ...

There are many study on the Euclidean distance degree. In the paper \cite{DHOST}, the authors introduced the EDdeg under the algebraic geometry view of point.  In \cite{AH}, the authors gave a formula of EDdeg of projective varieties involving characteristic classes. In \cite{MRW},  the EDdeg of a complex affine variety is given in terms of Euler characteristic. Another observation is considered in \cite{BSW} where the authors gave a combinatoric formula for the EDdeg of a hypersurface in terms of the mixed volume of the related Newton polytopes. In a recent paper \cite{LMR}, instead of the distance function, the authors considered a general polynomial function $f_0$ on an algebraic set $X:= \{x\in \mathbb{C}^n : f_1(x)= \ldots = f_m(x)=0\}, n>m$ and  proved that if the tupe of supports of $f_i$ is strongly admissible (see \cite{LMR} for detail) then the number of critical points of the restriction of $f_0$ on $X$ is computed in terms of the mixed volume of the Newton polytopes of $f_i$ and related functions. That is a very generalized result of the one in \cite{BSW}.

The aim of this paper is to give another generalization to the formula in \cite{BSW} for complete intersections. Here, we use different method, with the setting of the distance function, we reduce the assumption as in \cite{LMR}. More precisely, let $X=\{x\in \mathbb{R}^n : f_1(x)= \ldots = f_m(x)=0\}, n>m$ where $f_1, \ldots, f_m$ are polynomials in $n$ variables such that $\dim X= n-m$. For $u= (u_1, \ldots, u_n)\in \mathbb{R}^n$, denote by $P_1,\ldots,P_m\subset \mathbb{R}^n$ the Newton polytopes of $f_1, \ldots,f_m$ and $P'_i\subset \mathbb{R}^{n+m}, i=1, \ldots, n$ the Newton polytopes of $x_i-u_i+\sum_{j=1}^m \lambda_j \partial_i f_j(x)\in \mathbb{R}[x,\lambda], \lambda= (\lambda_1, \ldots, \lambda_m)$. The main results in this paper are the followings.

\begin{thm} \label{Th1}
 Assume that $X$ is a complete intersection whose no component is contained in a coordinate hyperplane.  Then
\begin{equation}\label{inequality1}
\operatorname{EDdeg}(X) \leq\operatorname{MV}\left(P_1,\ldots,P_m, P'_1, \ldots, P'_n\right),
\end{equation}
 where $\operatorname{MV}$ denotes the mixed volume of $n+m$ polytopes in $\mathbb{R}^{n+m}$. 
\end{thm}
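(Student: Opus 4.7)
The plan is to express $\operatorname{EDdeg}(X)$ as the cardinality of the zero set of an explicit Lagrange system in $n+m$ variables and then bound this cardinality by the mixed volume via the Bernstein--Kushnirenko--Khovanskii (BKK) theorem. At a smooth point $x$ of $X_{\C}$, Lagrange multipliers show that $x$ is critical for $d_u|_{X_{\C}}$ iff $x - u$ lies in the span of $\nabla f_1(x), \ldots, \nabla f_m(x)$. Because $X$ is a complete intersection, these gradients are linearly independent on the smooth locus, so there is a unique $\lambda = (\lambda_1, \ldots, \lambda_m) \in \C^m$ satisfying
\[
g_i(x,\lambda) := x_i - u_i + \sum_{j=1}^m \lambda_j \partial_i f_j(x) = 0 \quad (i = 1, \ldots, n).
\]
Together with $f_j(x) = 0$ for $j = 1, \ldots, m$, this forms an $(n+m) \times (n+m)$ polynomial system in the unknowns $(x,\lambda) \in \C^{n+m}$ whose Newton polytopes in $\R^{n+m}$ are by construction exactly $P_1,\ldots,P_m,P'_1,\ldots,P'_n$.

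Hence, for generic $u$, $\operatorname{EDdeg}(X)$ is at most the number of isolated zeros of this system in $\C^{n+m}$, since the assignment sending a smooth critical point $x$ to the pair $(x,\lambda)$ is injective. The BKK theorem, applied to this system, yields
\[
\#\{\text{isolated zeros in } (\C^*)^{n+m}\} \leq \operatorname{MV}(P_1,\ldots,P_m,P'_1,\ldots,P'_n),
\]
so it remains to confine the contributing critical points to the torus $(\C^*)^{n+m}$.

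This is the main obstacle, and is precisely where the hypothesis that no component of $X$ lies in a coordinate hyperplane enters. A Lagrange zero with some $x_i = 0$ corresponds to a critical point of $d_u$ on the proper subvariety $X \cap \{x_i = 0\}$; by the hypothesis this locus has strictly smaller dimension in $X$, and a genericity argument on $u$ rules out isolated critical points of $d_u|_{X_{\mathrm{sm}}}$ lying there. A zero with some $\lambda_j = 0$ would express the generic vector $x - u$ as a combination of the $\nabla f_i(x)$ not using $\nabla f_j$, which should likewise be a non-generic condition on $u$. To make this rigorous I would perturb the coefficients of the $f_j$ within their fixed supports and perturb $u$ generically so that all isolated zeros of the perturbed Lagrange system land in $(\C^*)^{n+m}$, apply BKK to the perturbation, and then pass back to the original system by upper semicontinuity of the isolated-zero count under specialization. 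Controlling this limit carefully -- ensuring that no zero contributing to $\operatorname{EDdeg}(X)$ is absorbed into a coordinate hyperplane as the perturbation is switched off -- is the crux of the argument.
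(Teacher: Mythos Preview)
Your overall strategy---set up the Lagrange system in $(x,\lambda)\in\C^{n+m}$, confine its solutions to the torus for generic $u$, and apply Bernstein's bound---is exactly the paper's approach. The gap is in how you carry out the torus confinement. You correctly identify the two obstacles (some $x_i=0$, some $\lambda_j=0$) and give the right intuition for each, but then you propose to resolve them by perturbing the coefficients of the $f_j$ and invoking upper semicontinuity. This detour is both unnecessary and problematic: Theorem~\ref{Th1} is asserted for the \emph{given} polynomials $f_1,\ldots,f_m$, not for generic ones, and an upper-semicontinuity argument in the $f_j$ does not directly control $\operatorname{EDdeg}(X)$ for the original $X$ (zeros can migrate into or out of coordinate hyperplanes as you specialize, which is precisely the issue you flag at the end without resolving).

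The paper avoids any perturbation of the $f_j$ by making your intuitive dimension counts rigorous via an incidence variety. Set
\[
M=\bigl\{(u,x,\lambda)\in\C^n\times (X_\C)_{\mathrm{reg}}\times\C^m : x-u+\textstyle\sum_j\lambda_j\operatorname{grad} f_j(x)=0\bigr\},
\]
which has dimension $n$, and consider the three projections $\pi_1,\pi_2,\pi_3$ to the $u$-, $x$-, and $\lambda$-factors. For the $\lambda$-obstacle, $\pi_3$ is a trivial fibration with fiber $(X_\C)_{\mathrm{reg}}$, so the preimage of the union $Y$ of coordinate hyperplanes in $\C^m$ has dimension $(m-1)+(n-m)=n-1$; hence its image $\gamma=\pi_1(\pi_3^{-1}(Y))$ in $u$-space has dimension $\le n-1$. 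For the $x$-obstacle, the hypothesis that no component of $X_\C$ lies in a coordinate hyperplane gives $\dim(X_\C\cap\{x_i=0\})<n-m$, and since $\pi_2$ has $m$-dimensional fibers the analogous image $\gamma'$ in $u$-space again has dimension $<n$. Choosing $u\notin\gamma\cup\gamma'$ forces every point of $\pi_1^{-1}(u)$ to lie in $(\C^\times)^n\times(\C^\times)^m$, and Bernstein's theorem finishes the proof. This is exactly the dimension count your sentences ``a genericity argument on $u$ rules out\ldots'' and ``should likewise be a non-generic condition on $u$'' were gesturing at; you should carry it out directly rather than introduce a perturbation of the $f_j$.
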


\begin{thm} \label{Th2}
 Assume that the supports of $f_1, \ldots,f_m$ contain the origin $0\in \mathbb{R}^n$. Then, when $f_1, \ldots,f_m$ are generic with respect to their Newton polytopes, we have
\begin{equation}\label{equality2}
\operatorname{EDdeg}(X) = \operatorname{MV}\left(P_1,\ldots,P_m, P'_1, \ldots, P'_n\right).
\end{equation}
 
\end{thm}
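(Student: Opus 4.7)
The plan is to establish the reverse inequality $\operatorname{EDdeg}(X) \ge \operatorname{MV}(P_1,\dots,P_m,P'_1,\dots,P'_n)$ and combine it with Theorem \ref{Th1} to conclude. The approach is to apply Bernstein's theorem to the Lagrange multiplier system
\begin{equation*}
f_j(x)=0 \ (j=1,\dots,m),\qquad g_i(x,\lambda):=x_i-u_i+\sum_{j=1}^m \lambda_j \partial_i f_j(x)=0 \ (i=1,\dots,n),
\end{equation*}
which is a square system of $n+m$ polynomials in the $n+m$ unknowns $(x,\lambda)$ whose Newton polytopes are precisely $P_1,\dots,P_m,P'_1,\dots,P'_n$. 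Thus Bernstein's bound on the number of isolated roots in $(\C^*)^{n+m}$ is exactly $\operatorname{MV}(P_1,\dots,P_m,P'_1,\dots,P'_n)$.

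My first step is to recall that the critical points of $d_u$ on the smooth part $X_\C^{\operatorname{sm}}$ are in bijection with the roots $(x,\lambda)$ of the Lagrange system for which $x\in X_\C^{\operatorname{sm}}$, since at such a point the gradients $\nabla f_1(x),\dots,\nabla f_m(x)$ are linearly independent and hence determine $\lambda$ uniquely. Next I would verify that under the hypothesis $0\in \operatorname{supp}(f_j)$ for each $j$, a generic choice of $f_j$'s gives a smooth complete intersection $X_\C$ with no component contained in a coordinate hyperplane: the free generic constant terms make $f_j|_{x_k=0}$ a generic polynomial with nonzero constant term, so a Bertini-type argument shows $X_\C\cap(\C^*)^n$ is smooth and $\dim(X_\C\cap\{x_k=0\})=\dim X_\C - 1$. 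In particular, Theorem \ref{Th1} applies and yields the upper bound.

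The heart of the proof is the attainment of Bernstein's bound. I would argue that for $(f_1,\dots,f_m)$ generic with respect to their Newton polytopes and for $u$ generic, the Lagrange system has exactly $\operatorname{MV}(P_1,\dots,P_m,P'_1,\dots,P'_n)$ simple isolated roots in $(\C^*)^{n+m}$. The Bernstein genericity condition demands that for every nonzero weight $w\in\R^{n+m}$ the $w$-initial face system has no common root in $(\C^*)^{n+m}$; I would verify this face by face, using that $0\in P_j$ and $0\in P'_i$ (the latter via the $-u_i$ term) supply a free generic coefficient in every boundary face. Each torus root $(x,\lambda)$ then satisfies $x\in X_\C^{\operatorname{sm}}\cap(\C^*)^n$, so it corresponds to a smooth critical point of $d_u$, and distinct torus roots give distinct $x$-projections by the uniqueness of $\lambda$ at smooth points. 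This yields $\operatorname{EDdeg}(X)\ge \operatorname{MV}(P_1,\dots,P_m,P'_1,\dots,P'_n)$, and combining with Theorem \ref{Th1} proves equality.

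The main obstacle I foresee is justifying that Bernstein's bound is actually reached despite the nontrivial correlation between the coefficients of $f_j$ and $g_i$: the coefficient of $x^{\alpha-e_i}\lambda_j$ in $g_i$ equals $\alpha_i$ times the coefficient of $x^\alpha$ in $f_j$, so the full coefficient tuple of the Lagrange system lies on a proper algebraic subvariety of the product of individual coefficient spaces. One must show that this subvariety is not contained in the Zariski-closed locus of coefficient tuples where Bernstein's bound fails, and here the assumption $0\in\operatorname{supp}(f_j)$ is decisive: the $m$ constant terms of the $f_j$ together with the $n$ components of $u$ contribute enough genuinely free parameters to move generically off every bad initial face system.
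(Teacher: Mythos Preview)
Your overall strategy coincides with the paper's: reduce $\operatorname{EDdeg}(X)$ to the count of torus solutions of the Lagrange system and then invoke Bernstein's Other Theorem by showing that every nonzero-weight face system is infeasible in $(\C^\times)^{n+m}$. The preliminary reductions (smoothness, no component in a coordinate hyperplane, bijection between smooth critical points and Lagrange solutions) are also handled in the paper by two short lemmas, essentially as you sketch.

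The gap is in your face-by-face verification. The claim that ``$0\in P_j$ and $0\in P'_i$ supply a free generic coefficient in every boundary face'' is false as stated: the face $\Delta(w,P_j)$ contains the origin only when $d(w,P_j)=0$, and for many weights (in particular whenever some $w_i<0$) one has $d(w,P_j)<0$, so the constant term of $f_j$ is absent from $(f_j)_w$. Likewise the monomial $-u_i$ drops out of $(g_i)_w$ whenever $\min\{w_i,e_i\}<0$. Hence the ``$m+n$ free parameters'' heuristic does not cover the hard cases, and the correlation obstacle you correctly flag is not resolved by constants alone.

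The paper closes those cases with two ingredients you have not mentioned. First, it uses Khovanskii non-degeneracy of the tuple $(f_1,\dots,f_m)$, a generic condition on \emph{all} coefficients: several face systems reduce to the assertion that $\{(f_k)_w=0:\ k\in S\}$ together with a rank drop of the Jacobian of $((f_k)_w)_{k\in S}$ has no torus solution, which is precisely non-degeneracy. Second, when the face equations take the shape $x_i+\sum_{k\in S}\lambda_k\,\partial_i(f_k)_w=0$ on a subset $M$ of indices sharing $w_i=e/2<0$, the paper multiplies by $w_ix_i$, sums over $i$, and applies the Euler relation for the weighted-homogeneous polynomials $(f_k)_w$ to obtain $\sum_{i\in M}x_i^2=0$; infeasibility then follows from generic transversality of $\{(f_k)_w=0:\ k\in S\}$ with this quadric. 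Your proposal would need to supply these (or equivalent) arguments to be complete.
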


The proofs of these results are given in the last section, while the next section devotes to recall some notions and results concerning Newton polytopes.

\section{Preliminaries}

\subsection{Khovanskii non-degeneracy}
Let $h:\mathbb{C}^n \rightarrow \mathbb{C}$ be a polynomial function. Assume that $h$ is written as $h(x)=\sum_{\alpha\in \mathbb{N}^n} a_\alpha x^\alpha$, where $x^{\alpha}=x_1^{\alpha_1}\ldots x_n^{\alpha_n}$ for $\alpha= (\alpha_1, \ldots, \alpha_n)$. The {\it support} of $h$ is defined as the set of those $\alpha \in \mathbb{N}^n$ such that $a_\alpha \neq 0$, denoted by $\operatorname{supp}(h)$.
The {\it Newton polytope} of $h$, denoted by $\Gamma(h)$, is defined as the convex hull in $\mathbb{R}^n$ of the support of $h$.
The {\it Newton polyhedron} of $h$, denoted by $\Gamma_{+}(h)$, is by definition the  boundary of $\Gamma(h)$.  For each face $\Delta$ of $\Gamma_{+}(h)$, we will denote by $h_{\Delta}$ the polynomial $\sum_{\alpha \in \Delta} a_\alpha x^\alpha$; if $\Delta \cap \operatorname{supp}(h)=\emptyset$ we let $h_{\Delta}:=0$.
We say that a subset $\Gamma$
of $\mathbb{R}^n$ is a {\it Newton polytope} if there is a finite subset $A\subset \mathbb{N}^n$ such that $\Gamma$ is equal
to the convex hull of $A$. 

 Given polytopes $\Gamma_1, \ldots, \Gamma_m$ in $\mathbb{R}^m$, we write $\operatorname{MV}(\Gamma_1, \ldots, \Gamma_m)$ for their mixed volume. The notion of mixed volume
was defined by Minkowski; see its definition and properties in \cite[Sect. IV.3]{Ewald}.

Given a nonzero vector $q \in \mathbb{R}^n$, we define
$$
\begin{aligned}
& d\left(q, \Gamma(h)\right):=\min \left\{\langle q, \alpha\rangle: \alpha \in \Gamma(h)\right\} \\
& \Delta\left(q, \Gamma(h)\right):=\left\{\alpha \in \Gamma(h):\langle q, \alpha\rangle=d\left(q, \Gamma(h)\right)\right\} .
\end{aligned}
$$
It is easy to check that for each nonzero vector $q \in \mathbb{R}^n, \Delta:=\Delta\left(q, \Gamma(h)\right)$ is a closed face of $\Gamma(h)$. Conversely, if $\Delta$ is a closed face of $\Gamma(h)$ then there exists a nonzero vector $q \in \mathbb{R}^n$ such that $\Delta=\Delta\left(q, \Gamma(h)\right)$. We also denote by $h_q$ the polynomial $h_{\Delta}$.

\begin{rem}  Let $\Delta:=\Delta\left(q, \Gamma(h)\right)$ for some nonzero vector $q:=\left(q_1, \ldots, q_n\right) \in \mathbb{R}^n$.
By definition, $h_{\Delta}=\sum_{\alpha \in \Delta} a_\alpha x^\alpha$ is a weighted homogeneous polynomial of type $\left(q, d:=d\left(q, \Gamma(h)\right)\right)$, i.e., we have for all $t$ and all $x \in \mathbb{C}^n$,
$$
h_{\Delta}\left(t^{q_1} x_1, \ldots, t^{q_n} x_n\right)=t^d h_{\Delta}\left(x_1, \ldots, x_n\right) .
$$
This implies the Euler relation
\begin{equation}\label{eulerrelation}
\sum_{i=1}^n q_i x_i \frac{\partial h_{\Delta}}{\partial x_i}(x)=d h_{\Delta}(x).
\end{equation}
\end{rem}

Now, consider $m$ polynomial functions $f_1, \ldots, f_m:\mathbb{C}^n \rightarrow \mathbb{C}, n>m$. The following definition of non-degeneracy is inspired from the work of Kouchnirenko \cite{Kouchnirenko}, where the case $m=1$ was considered (see  \cite{Khovanskii,Oka1,Oka2}).

\begin{dfn}{\rm 
 We say that the $m$-tuple $f:= (f_1,\ldots, f_m)$ is (Khovanskii) non-degenerate if, for any nonzero vector $q \in \mathbb{R}^n$ we have
$$
f_q^{-1}(0) \cap\left\{x \in \mathbb{C}^n: \operatorname{rank}\left(J f_q(x)\right)<m\right\} \subset\left\{x \in \mathbb{C}^n: x_1 \ldots x_n=0\right\},
$$
where $f_q$ denotes the map $\left((f_1)_q, \cdots, (f_m)_q\right)$ and $J f_q$ denotes the Jacobian matrix of the map  $f_q: \mathbb{C}^n \rightarrow \mathbb{C}^m$.}
\end{dfn}

Let $A$ be a compact subset of $\mathbb{R}^n$ such that $A\cap \mathbb{N}^n\neq \emptyset$. Let us define
$$\mathcal{P}(A) = \{h\in \mathbb{C}[x]: supp(h)\subseteq A \cap \mathbb{N}^n\}.$$
Since $A$ is compact, $\mathcal{P}(A)$ is a finite-dimensional complex vector space.
Let $V$ be a complex vector space of finite dimension. We say that a given property is generic
in $V$ when there exists a Zariski-open set $U\subseteq V$ such that any element $v\in  U$ satisfies the said
property.

The non-degeneracy of  polynomial maps is generic (see \cite[page 291]{Khovanskii} or \cite[page 263]{Oka2}) as stated below.

\begin{lemma}\label{nondegenerategeneric}
Let $\Gamma_1, \ldots, \Gamma_m$ be Newton polytopes in $\mathbb{R}^n$. Let $\mathcal{P}$ be the product vector space defined by 
$$\mathcal{P}:= \mathcal{P}(\Gamma_1)\times \ldots \times \mathcal{P}(\Gamma_m).$$
Then the property that a map $f=(f_1, \ldots, f_m)$ in $\mathcal{P}$ is non-degenerate is generic in $\mathcal{P}$.
\end{lemma}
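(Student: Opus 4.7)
The plan is to combine a combinatorial finiteness reduction with a Bertini-style generic smoothness argument on a universal incidence variety.

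First, as $q$ ranges over $\R^n\setminus\{0\}$, the induced face tuple $(\Delta(q,\Gamma_1),\ldots,\Delta(q,\Gamma_m))$ takes only finitely many values, because each $\Gamma_i$ has finitely many faces. It therefore suffices to fix a face tuple $\Delta=(\Delta_1,\ldots,\Delta_m)$ and show that the set
$$U_\Delta := \{f\in\mathcal{P} : f_\Delta=0 \text{ and } \rk(Jf_\Delta)<m \text{ have no common solution in } (\C^*)^n\}$$
contains a nonempty Zariski-open subset of $\mathcal{P}$; a finite intersection of such subsets is still Zariski-open, and any of its points is by definition non-degenerate.

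Second, I would form the universal incidence
$$\mathcal{V}_\Delta := \{(f,x)\in\mathcal{P}\times(\C^*)^n : f_\Delta(x)=0\}$$
and verify that the evaluation map $(f,x)\mapsto f_\Delta(x)$ is a submersion onto $\C^m$ along $\mathcal{V}_\Delta$. The key observation is that every $\Delta_i\cap\N^n$ is nonempty (it contains a vertex of $\Gamma_i$, which lies in the support), so for $\alpha\in\Delta_i\cap\N^n$ and $x\in(\C^*)^n$, varying the single coefficient $a_{i,\alpha}$ of $f_i$ changes the $i$-th output by the nonzero amount $x^\alpha$; thus the derivative in the $\mathcal{P}$-direction alone already surjects onto $\C^m$. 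Consequently $\mathcal{V}_\Delta$ is smooth of codimension $m$, and generic smoothness (algebraic Sard in characteristic zero) applied to the projection $\pi:\mathcal{V}_\Delta\to\mathcal{P}$ yields a Zariski-dense open subset of the image $\pi(\mathcal{V}_\Delta)$ over which the scheme-theoretic fiber is smooth. Smoothness of the fiber $\pi^{-1}(f)=\{x\in(\C^*)^n:f_\Delta(x)=0\}$ is precisely the condition that $\rk(Jf_\Delta)=m$ at every zero in the torus, which is the non-degeneracy at $\Delta$. Off the Zariski closure of $\pi(\mathcal{V}_\Delta)$ the fibers are empty, so the condition is vacuous there.

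The main obstacle I foresee is the bookkeeping needed to convert these two pieces into a single nonempty Zariski-open subset of $\mathcal{P}$: the image $\pi(\mathcal{V}_\Delta)$ is only constructible, so one must split into the case where this image is Zariski-dense in $\mathcal{P}$ (Sard directly supplies the required open locus) and the case where it is not (the complement of its closure supplies it). This two-case analysis is essentially what appears in Khovanskii's and Oka's original treatments, and adapting it to the product $\mathcal{P}=\mathcal{P}(\Gamma_1)\times\cdots\times\mathcal{P}(\Gamma_m)$ should be straightforward given the submersion lemma above; a parallel dimension count on the bad incidence $\{(f,x):f_\Delta(x)=0,\rk(Jf_\Delta(x))<m\}$ gives an equivalent, more computational route when needed.
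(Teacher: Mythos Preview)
The paper does not actually prove this lemma: it is stated as a known fact with the attribution ``see \cite[page 291]{Khovanskii} or \cite[page 263]{Oka2}'' immediately preceding the statement, and no proof is given. So there is nothing in the paper to compare your argument against.

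That said, your proposal is correct and is precisely the standard argument one finds in those references. The two ingredients you identify---the finiteness of face tuples $(\Delta(q,\Gamma_1),\ldots,\Delta(q,\Gamma_m))$ as $q$ varies, and the submersion/generic-smoothness argument on the universal incidence $\mathcal{V}_\Delta$---are exactly what is needed, and your handling of the constructibility issue for $\pi(\mathcal{V}_\Delta)$ (splitting into the dense-image and non-dense-image cases) is the right way to close the argument. The observation that each $\Delta_i$ contains a vertex of $\Gamma_i$, hence a lattice point, so that varying the corresponding coefficient already makes the evaluation map submersive in the $\mathcal{P}$-directions alone, is the crux and you have it right. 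Nothing further is required.
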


\subsection{Bernstein's theorems}
The number of isolated solutions of a system of polynomial equations is estimated in terms of the mixed volume of Newton polytopes of the polynomials in the system.

\begin{thm} {\bf (see \cite{Bernstein})}\label{Ber1}
Let $g_1, \ldots, g_m \in \mathbb{C}\left[x_1, \ldots, x_m\right]$ be $m$ polynomials with Newton polytopes $\Gamma_1, \ldots, \Gamma_m$. Let $\# \mathcal{V}_{\mathbb{C}^{\times}}\left(g_1, \ldots, g_m\right)$ be the number of isolated solutions to $g_1=\cdots=g_m=0$ in $\left(\mathbb{C}^{\times}\right)^m$ , counted by their algebraic multiplicities. Then
$$
\# \mathcal{V}_{\mathbb{C}^{\times}}\left(g_1, \ldots, g_m\right) \leq \operatorname{MV}\left(\Gamma_1, \ldots, \Gamma_m\right)
$$
and the inequality becomes an equality when each $g_i$ is general given its support. 
\end{thm}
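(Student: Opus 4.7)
The plan is to follow Bernstein's original strategy: first prove the equality for generic coefficients by a polyhedral-homotopy deformation, and then deduce the inequality by upper semi-continuity of the isolated-solution count in $(\C^\times)^m$. I would begin by observing that in any one-parameter family of systems with fixed Newton polytopes, an isolated torus solution can only disappear by merging with another (which preserves the count with multiplicity) or by escaping to the boundary of $(\C^\times)^m$, i.e.\ to infinity or to a coordinate hyperplane; consequently the count on the left-hand side can only weakly increase when one specializes from an arbitrary $(g_1,\ldots,g_m)$ to a generic system with the same supports, which reduces the problem to the generic case.

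For the generic case, I would set up a polyhedral homotopy. Pick a generic height function $\omega_i : \operatorname{supp}(g_i) \to \R$ and form the deformation
$$
\tilde g_i(x,t) \;=\; \sum_{\alpha \in \operatorname{supp}(g_i)} a_{i,\alpha}\, t^{\omega_i(\alpha)}\, x^\alpha, \qquad i=1,\ldots,m,
$$
so that $\tilde g_i(x,1)=g_i(x)$. The joint lifting induces a coherent mixed subdivision of the Minkowski sum $\Gamma_1 + \cdots + \Gamma_m$, and a classical identity of Minkowski states that the total normalized volume of its mixed cells equals $\operatorname{MV}(\Gamma_1,\ldots,\Gamma_m)$.

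The heart of the argument is then to track the Puiseux expansions of the solution branches $x(t)$ as $t\to 0$. Writing $x_j(t) = t^{-v_j}\bigl(y_j + O(t^{\ve})\bigr)$ with $v \in \R^m$, the leading part of $\tilde g_i(x(t),t)$ selects a face of $\Gamma_i$ determined by the direction $v$; solvability forces these faces to jointly form a single mixed cell of the subdivision. A monomial change of variables then reduces the leading system on each mixed cell to a binomial system whose torus solution count is precisely the normalized volume of the cell (computed via Smith normal form on the exponent lattice). Summing over mixed cells yields $\operatorname{MV}(\Gamma_1,\ldots,\Gamma_m)$ solution paths, hence the same number of isolated torus solutions at $t=1$ for generic coefficients.

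The hard part will be controlling the boundary behaviour along the homotopy: one must rule out paths whose leading terms come from non-mixed cells (they correspond to asymptotic directions that are excluded by the Khovanskii non-degeneracy condition recalled in Section~2), show that distinct mixed cells contribute distinct limiting solutions, and verify that each initial binomial system produces exactly the expected number of simple torus roots. A heavier but conceptually cleaner alternative is to compactify $(\C^\times)^m$ inside a projective toric variety $X_\Sigma$ whose fan refines the normal fans of all the $\Gamma_i$; each $g_i$ then defines a section of a nef line bundle $\mathcal{O}(D_i)$, the intersection number $D_1\cdots D_m$ on $X_\Sigma$ equals $\operatorname{MV}(\Gamma_1,\ldots,\Gamma_m)$ by Kushnirenko--Bernstein, and the toric-boundary contributions to this intersection vanish under genericity, giving the equality directly.
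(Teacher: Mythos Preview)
The paper does not prove this statement: Theorem~\ref{Ber1} is quoted from Bernstein's 1975 paper and cited without proof, as a background result used later in the arguments for Theorems~\ref{Th1} and~\ref{Th2}. There is therefore no ``paper's own proof'' to compare your proposal against.

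That said, your outline is a faithful sketch of the two standard routes to Bernstein's theorem. The polyhedral-homotopy argument you describe is essentially the Huber--Sturmfels refinement of Bernstein's original deformation; the toric-compactification alternative is the Khovanskii approach. Both are correct in broad strokes. One small wording issue: you write that the count ``can only weakly increase when one specializes from an arbitrary $(g_1,\ldots,g_m)$ to a generic system''. The verb \emph{specialize} usually means moving \emph{from} generic \emph{to} special, so the sentence reads backwards; the content you want is simply upper semi-continuity of the isolated-intersection multiplicity, which gives $\#\mathcal{V}_{\mathbb{C}^\times}(g) \le \#\mathcal{V}_{\mathbb{C}^\times}(g^{\mathrm{gen}})$ for any $g$ with the same supports as a nearby generic $g^{\mathrm{gen}}$. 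With that clarified, the reduction to the generic case and the mixed-cell count are standard, and your list of ``hard parts'' (ruling out non-mixed asymptotics, simplicity of the binomial start systems, distinctness of the limits) is exactly where the work lies in a full write-up.
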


We also need other version of Bernstein's theorem.

\begin{thm}{\rm (Bernstein's Other Theorem, see \cite[Theorem B]{Bernstein})} \label{Ber2}
Let $G=\left(g_1, \ldots, g_m\right)$ be a system of Laurent polynomials in variables $x_1, \ldots, x_m$. For each $1 \leq i \leq m$, let $\Gamma_i$ be the Newton polytope of $g_i$. Then
$$
\# \mathcal{V}_{\mathbb{C}^{\times}}\left(g_1, \ldots, g_m\right)<\operatorname{MV}\left(\Gamma_1, \ldots, \Gamma_m\right)
$$
if and only if there is a nonzero vector $w \in \mathbb{Z}^m$ such that the facial system $G_w$ has a solution in $\left(\mathbb{C}^{\times}\right)^m$. Otherwise, $\# \mathcal{V}_{\mathbb{C}^{\times}}\left(g_1, \ldots, g_m\right)$ is equal to $\operatorname{MV}\left(\Gamma_1, \ldots, \Gamma_m\right)$.
\end{thm}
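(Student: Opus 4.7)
The plan is to combine the upper bound of Theorem \ref{Th1} with Bernstein's Other Theorem \ref{Ber2}, applied to the Lagrange system that parametrizes critical points of $d_u|_{X_\C}$. For generic $u$ with $u_i\neq 0$, the critical points of the squared distance function on the smooth locus of $X_\C$ are in bijection with the solutions $(x,\lambda)\in\C^n\times\C^m$ of
$$f_j(x)=0,\ j=1,\ldots,m;\qquad x_i-u_i+\sum_{j=1}^m\lambda_j\partial_if_j(x)=0,\ i=1,\ldots,n,$$
a system of $n+m$ polynomial equations in $n+m$ variables whose Newton polytopes are exactly $P_1,\ldots,P_m,P'_1,\ldots,P'_n\subset\R^{n+m}$. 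Under our hypotheses Theorem \ref{Th1} applies and gives $\operatorname{EDdeg}(X)\leq\operatorname{MV}(P_1,\ldots,P_m,P'_1,\ldots,P'_n)$, so the task reduces to producing a matching lower bound.

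I would first argue that for generic $(f_1,\ldots,f_m)$ and generic $u$, every isolated solution of the Lagrange system lies in the torus $(\C^\times)^{n+m}$. The hypothesis $0\in\operatorname{supp}(f_j)$ is essential here: it forces $f_j(0)\neq 0$ for generic $f_j$, so $0\notin X_\C$. Generic Khovanskii non-degeneracy (Lemma \ref{nondegenerategeneric}) further implies that each slice $X_\C\cap\{x_i=0\}$ has the expected dimension $n-m-1$, and a Bertini-type argument then shows the critical locus of $d_u|_{X_\C}$ avoids these slices for generic $u$. The conditions $\lambda_j=0$ likewise cut out a lower-dimensional subvariety of the solution scheme which generic $u$ avoids. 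Consequently $\operatorname{EDdeg}(X)$ coincides with the number of torus solutions of the Lagrange system.

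The second step is to verify Bernstein's Other Theorem \ref{Ber2} for this system: for every nonzero $w\in\Z^{n+m}$ the facial subsystem admits no common zero in $(\C^\times)^{n+m}$. Writing $w=(w',w'')\in\Z^n\times\Z^m$, the face polynomials $(f_j)_w$ depend only on $w'$ and coincide with face polynomials of the $f_j$ determined by $w'$, while the face polynomials of $x_i-u_i+\sum_j\lambda_j\partial_if_j$ couple $w'$ and $w''$ through the linear dependence on $\lambda$. A case analysis on the signs of the $w'_i$ and $w''_j$, combined with the presence of the origin in every $P_j$ (coming from $0\in\operatorname{supp}(f_j)$) and in every $P'_i$ (coming from the constant term $-u_i$), reduces every bad case either to a contradiction from a surviving nonzero constant or to a Khovanskii face system whose torus zeros are forbidden for generic $(f_1,\ldots,f_m)$. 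This yields equality in Bernstein's bound.

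The main obstacle I anticipate is executing this facial-system case analysis cleanly, especially coordinating face selection on the $P'_i$ (which mix $x$- and $\lambda$-monomials) with that on the $P_j$ (which involves only $x$-monomials). The key reduction should be that torus-solvability of the entire facial system forces a nontrivial identity among face polynomials of the $f_j$, which is precisely what Khovanskii non-degeneracy and the hypothesis $0\in\operatorname{supp}(f_j)$ are designed to prevent. Once this facial analysis is in place, Theorem \ref{Ber2} yields the equality of Bernstein's bound with the torus-solution count, which the first step identifies with $\operatorname{EDdeg}(X)$, completing the proof of Theorem \ref{Th2}.
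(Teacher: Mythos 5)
Your proposal does not actually prove the statement at hand. The statement is Theorem \ref{Ber2} itself --- Bernstein's Other Theorem, a general criterion for an arbitrary system of Laurent polynomials $g_1,\ldots,g_m$: the number of toric solutions falls strictly below $\operatorname{MV}(\Gamma_1,\ldots,\Gamma_m)$ if and only if some facial system $G_w$, $w\neq 0$, has a zero in $(\C^\times)^m$. In the paper this is a quoted classical result, cited to \cite[Theorem B]{Bernstein} and not proved; its proof requires toric machinery (degeneration to initial/facial systems, or intersection theory on the toric compactification associated with the Minkowski sum of the Newton polytopes), none of which appears in your write-up. What you have written is instead an outline of the proof of Theorem \ref{Th2}: you \emph{invoke} Theorem \ref{Ber2} as a black box, apply it to the Lagrange system \eqref{1.2}, and sketch the facial case analysis that the paper carries out in Theorem \ref{facefunction}. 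As an argument for the assigned statement this is circular --- you assume the very theorem you were asked to establish --- and as a standalone argument it never touches the general ``if and only if'' content of Bernstein's criterion, which concerns arbitrary sparse systems, not just the ED Lagrange system.

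If the intent was to prove Theorem \ref{Th2}, then your outline is broadly aligned with the paper's strategy (reduce to torus solutions of \eqref{1.2}, bound by Theorem \ref{Ber1}, and rule out toric zeros of every facial system via genericity and Khovanskii non-degeneracy), but the substantive work there is precisely the case analysis of Theorem \ref{facefunction}, which you defer as ``the main obstacle'' rather than execute; in particular the delicate subcases where $w_i=e_i$ (the Euler-relation argument producing $\sum_{i}w_ix_i^2=0$) and the subsystem \eqref{subsystem} for $w$ vanishing on the $I$-coordinates are exactly where the hypotheses $0\in P_j$ and genericity of $u$ are used, and they are not recoverable from the sketch as written.
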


\section{Proof of the main results}
Let $f_1, \ldots, f_m\in \mathbb{R}[x]$ be polynomials in $n$ variables $x_1, \ldots, x_n (n>m)$ such that the algebraic set $X:=\{x\in \mathbb{R}^n : f_1(x)=\ldots = f_m(x)=0\}$ is of dimension $n-m$. Then its complexification $X_{\C}=\{x\in \mathbb{C}^n : f_1(x)=\ldots = f_m(x)=0\}$ is also of dimension  $n-m$.  We denote by $Sing(X_{\C})$ the set of singular points of $X_{\C}$ and $(X_{\C})_{\text{reg}}:=X_{\C}\setminus Sing(X_{\C})$ the set of regular points of $X_{\C}$.

For a point $u=\left(u_1, \ldots, u_n\right) \in \mathbb{C}^n$, we consider the restriction of the distance function
$$\begin{aligned} d_u: X_{\C} & \longrightarrow \mathbb{C} \\ x & \longmapsto\|x-u\|^2=\sum_{i=1}^n\left(x_i-u_i\right)^2.\end{aligned}$$
The set of critical points of $d_u$ which do not belong to $Sing(X_{\C})$ is
\begin{align*}
\Sigma_u=\left\{x \in (X_{\C})_{\text{reg}}: x-u+\sum_{j=1}^m \lambda_j \operatorname{grad} f_j(x)=0\quad\textrm{for some}\quad \lambda_1, \ldots, \lambda_m \in \mathbb{C} \right\}. 
\end{align*}
By definition, $\mathrm{EDdeg}(\mathrm{X})$ is equal to the number of points in $\Sigma_u$ for $u$ is generic.

We denote by $\Sigma$ the following set.
$$
\Sigma=\left\{(x,\lambda)\in (X_{\C})_{\text{reg}} \times \mathbb{C}^m: x-u+\sum_{j=1}^m \lambda_j \operatorname{grad} f_j(x)=0\right\}.
$$
It is easy to check that
 $$\# \Sigma_u=\# \Sigma.$$
 
Also, we consider the set $M$ of points $(u, x, \lambda) \in \mathbb{C}^n \times (\mathbb{C}^n\setminus Sing(X_{\C})) \times \mathbb{C}^m$ satisfying the following system:
\begin{equation}\label{1.2}
f_1(x)=\ldots = f_m(x)=0\quad\textrm{and}\quad x-u+\sum_{j=1}^m \lambda_j \operatorname{grad} f_j(x)=0.
\end{equation}
One can check that $M$ is a complex manifold with $\operatorname{dim} M=n$.
Consider the projection
$$
\begin{aligned}
\pi_2: \hspace{0.8cm} M &\longrightarrow (X_{\C})_{\text{reg}} \\
(u, x, \lambda) &\mapsto x.
\end{aligned}
$$
For each $x_0 \in (X_{\C})_{\text{reg}},$
$$\pi_2^{-1}\left(x_0\right)=\left\{(u, \lambda) \in \mathbb{C}^n \times \mathbb{C}^m: u-x_0=\sum_{j=1}^m \lambda_j \operatorname{grad} f_j\left(x_0\right)\right\}.$$ It is easy to see that $\pi_2^{-1}\left(x_0\right)$ is isomorphic to $\mathbb{C}^m$. Thus, one can show that $\pi_2: M \longrightarrow (X_{\C})_{\text{reg}}$ is a trivial fibration with fibers $\mathbb{C}^m$.

Denote by $\pi_1: M\longrightarrow \mathbb{C}^n$  the projection $(u,x,\lambda)\mapsto u$, then $\pi_1$  is dominant. The fiber $\pi_1^{-1}(u_0)$ over $u_0$ is the set of $(x,\lambda)\in (\mathbb{C}^n\setminus Sing(X)) \times \mathbb{C}^m$ satisfying the system \eqref{1.2}.
Since $\operatorname{dim} M=n$ the generic fiber $ \pi_1^{-1}(u)$ of $\pi_1$ has dimension $n-n=0$. Therefore $
\# \pi_1^{-1}(u)= \text{const}$
when $u$ is not a critical point of $\pi_1$.
Hence, 
$$\# \pi_1^{-1}(u)=\# \Sigma_u = \mathrm{EDdeg}(X).$$

Now, let consider 
$$
\begin{aligned}
\pi_3: \hspace{0.8cm} M &\longrightarrow \mathbb{C}^m\\
(u, x, \lambda) &\mapsto \lambda \text {, }
\end{aligned}
$$
then for each $\lambda\in \C^m$ we have
$$\pi_3^{-1}(\lambda)=\left\{(u, x, \lambda) \in \mathbb{C}^n \times (X_{\C})_{\text{reg}} \times \mathbb{C}^m \mid  u-x=\sum_{j=1}^m \lambda_j \operatorname{grad} f_j(x)\right\}.$$
It is easy to see that $\pi_3^{-1}(\lambda)$ is isomorphic to $(X_{\C})_{\text{reg}}$, so $\pi_3: M \longrightarrow \mathbb{C}^m$ is a trivial fibration with fiber $(X_{\C})_{\text{reg}}$.
Thus, for any subset $U$ of $\C^m$ one has
 $$\pi_3^{-1}(U) \simeq (X_{\C})_{\text{reg}} \times U.$$
Let $Y \subset \mathbb{C}^m$ be the union of coordinate hyperplanes. Then $\operatorname{dim} Y=m-1$ and  
$$\operatorname{dim} \pi_3^{-1}\left(Y\right)=\operatorname{dim} Y+\operatorname{dim} X_{\C}=(m-1)+(n-m)=n-1.$$
Denote $$\gamma=\pi_1\left(\pi_3^{-1}\left(Y\right)\right) \subset \mathbb{C}^n,$$ then $$\operatorname{dim} \gamma \leq n-1.$$

\begin{proof}[Proof of Theorem \ref{Th1}]
If $X$ does not have any irreducible component contained in a coordinate hyperplane of $\R^n$ then its complexification $X_{\C}$ also has no component contained in a coordinate hyperplane of $\C^n$.
Denote by $Y^{'}$ the intersection of $X_{\C}$ with the coordinate hyperplanes of $\C^n$. Then $\dim Y^{'}< \dim X_{\C}=n-m$ (otherwise $Y^{'}$ will be an union of some irreducible components of $X_{\C}$).

Let denote $$\gamma^{'}:= \pi_1\left(\pi_2^{-1}(Y^{'})\right).$$
We have
$$
\operatorname{dim} \pi_2^{-1}(Y^{'})=\operatorname{dim} Y^{'}+\operatorname{dim} \mathbb{C}^m<n-m+m=n .
$$
Thus, $\operatorname{dim} \gamma^{'}<n$.
Choose $u_0 \in \mathbb{C}^n \backslash\left(\gamma \cup \gamma^{\prime}\right)$, we have
$$
\pi_1^{-1}\left(u_0\right)=\left\{\left(u_0, x, \lambda\right) \in \mathbb{C}^n \times(\left(\mathbb{C}^\times\right)^n\setminus Sing(X_{\C})) \times\left(\mathbb{C}^\times\right)^m \mid (u_0, x,\lambda) \text { satisfies } \eqref{1.2}\right\} .
$$

Therefore $\operatorname{EDdeg}(X)$ is not larger than the number of solutions in $\left(\mathbb{C}^\times\right)^n \times\left(\mathbb{C}^\times\right)^m$ of the system \eqref{1.2}. The inequality \eqref{inequality1} is then followed by applying Theorem \ref{Ber1}.

\end{proof}

Before proving Theorem \ref{Th2}, we need the following properties of algebraic sets defined by polynomials with generic coefficients.

\begin{lemma}\label{lm1}
Let $X=\{x\in \mathbb{R}^n : f_1(x)= \ldots = f_m(x)=0\}, n>m$ where $f_1, \ldots, f_m$ are non-constant polynomials such that their Newton polytopes $P_1, \ldots, P_m$ contain the origin. Then, if $f_j, j=1, \ldots, m$ are generic with respect to their Newton polytopes $P_j$, the complexification $X_{\C}$ of $X$ is smooth and irreducible. 
\end{lemma}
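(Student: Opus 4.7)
My plan is to iterate Bertini's theorems, exploiting the hypothesis $0 \in P_j$. I would begin by noting that $0 \in P_j$ is equivalent to $1 \in \mathcal{P}(P_j)$, so each linear system $\mathcal{P}(P_j)$ is base-point-free on $\C^n$: the constant section $1$ is nowhere zero.

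Next I would set up an induction on $j$. Let $Y_j := V(f_1,\ldots,f_j) \subseteq \C^n$ with $Y_0 := \C^n$. The inductive claim I would maintain is that for a non-empty Zariski open subset of $\mathcal{P}(P_1)\times\cdots\times\mathcal{P}(P_j)$, the variety $Y_j$ is a smooth, irreducible complete intersection of pure dimension $n-j$. The inductive step proceeds by restricting $\mathcal{P}(P_j)$ to $Y_{j-1}$; the restricted system is still base-point-free (because $1|_{Y_{j-1}}$ is nowhere zero), so Bertini's smoothness theorem in characteristic zero yields smoothness of $Y_j = Y_{j-1} \cap V(f_j)$ of the expected dimension $n-j$ for generic $f_j$. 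For irreducibility I would invoke a Bertini-type irreducibility theorem, which applies because $\dim Y_{j-1} = n - j + 1 \geq 2$ (using $j \leq m < n$) and because $P_j$ contains a lattice point other than the origin (by non-constancy of $f_j$), making the restricted linear system at least two-dimensional. After $m$ inductive steps, $X_{\C} = Y_m$ is smooth and irreducible of dimension $n - m$ on a non-empty Zariski open subset of the full parameter space.

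The hard part will be verifying the precise technical hypotheses for Bertini's irreducibility theorem at each inductive stage, in particular that the restricted linear system $\mathcal{P}(P_j)|_{Y_{j-1}}$ is genuinely spanned by more than the single section $1$. I would reduce this to checking that for generic earlier coefficients, the variety $Y_{j-1}$ does not lie in any hypersurface of the form $\{x^\alpha = c\}$ with $\alpha \in P_j \cap \N^n \setminus \{0\}$ and $c \in \C$. The locus of coefficient tuples violating this is a proper Zariski-closed subset of $\mathcal{P}(P_1)\times\cdots\times\mathcal{P}(P_{j-1})$, so intersecting with the open set from the previous step preserves density and the induction closes.
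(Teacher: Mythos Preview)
Your approach via iterated Bertini theorems is genuinely different from the paper's. The paper forms the incidence variety $\Sigma = \{(x, a) \in \C^n \times \C^N : f_j(x)=0 \text{ for all } j\}$ over the coefficient space $\C^N$, observes that the hypothesis $0 \in P_j$ forces $\Sigma \cong \C^{n+N-m}$ (one may solve each equation for its constant term), and then projects to $\C^N$: Sard's theorem gives smoothness of the generic fiber, and a locally trivial fibration over a Zariski-open subset of $\C^N$, together with connectedness of the total space and the base, is invoked to deduce connectedness (hence irreducibility) of the generic fiber.

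Your argument has a genuine gap at the irreducibility step. Bertini's irreducibility theorem requires that the morphism $Y_{j-1} \to \mathbb{P}^r$ defined by the linear system $\mathcal{P}(P_j)|_{Y_{j-1}}$ have image of dimension at least $2$; it is not enough that $\dim Y_{j-1} \geq 2$ and that the space of sections be at least two-dimensional. Your check that $P_j$ contains a nonzero lattice point guarantees only a pencil in the worst case, and the generic member of a pencil can certainly be reducible. For a concrete failure of the inductive step, take $n = 2$, $m = 1$, and $P_1 = \operatorname{conv}\{(0,0), (2,0)\}$: then $\mathcal{P}(P_1)$ is spanned by $1, x_1, x_1^2$, the associated map $\C^2 \to \mathbb{P}^2$ has one-dimensional image, and the generic $f_1 = a_0 + a_1 x_1 + a_2 x_1^2$ cuts out two disjoint lines $\{x_1 = r_1\} \cup \{x_1 = r_2\}$ in $\C^2$. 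Your last paragraph attempts to rule out $Y_{j-1} \subset \{x^\alpha = c\}$, but that addresses the wrong issue: the problem is not that the restricted sections become linearly dependent, but that the morphism they define may have one-dimensional image even when they are independent. To close the induction you would need the monomial map attached to each $P_j$, restricted to $Y_{j-1}$, to have image of dimension at least $2$, and this does not follow from the stated hypotheses.
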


\begin{proof}
We write each polynomial $f_j$ as  
$$f_j(x)= \sum_{\alpha\in \supp f_j} a^j_{\alpha}x^{\alpha}.$$
By fixing an order on the set of all monomials of $f_j, j=1, \ldots, m$, we have the space $\C^N$ of all coefficients of all $f_j,j=1,\ldots, m.$ Denote
$$\Sigma:=\left\lbrace(x, a^j_{\alpha}) : \sum_{\alpha\in \supp f_j} a^j_{\alpha}x^{\alpha}=0, j=1,\ldots, m\right\rbrace.$$
Because of the assumption $f_j(0)\neq 0$ for all $j$, one can check that  $\Sigma$ is a smooth complex algebraic set of dimension $n+N-m$, in fact $\Sigma$ is isomorphic to $\C^{n+N-m}$. In particular $\Sigma$ is irreducible.

Consider the projection
$$\pi: \Sigma\longrightarrow \C^N, (x, a^j_{\alpha})\mapsto (a^j_{\alpha}).$$
By Sard's Theorem we obtain that for generic $(a^j_{\alpha})\in \C^N$, $\pi^{-1}(a^j_{\alpha})$ is smooth. That means if the  coefficients of $f_j, j=1, \ldots, m$ are generic then $X_{\C}$ is smooth.

On the other hand, we can assume that $\pi$ is dominant (otherwise we can replace the target by the Zariski closure of the image of $\pi$). It is known that (see \cite{Th,Ve}) there exists a proper algebraic subset $B$ of the target $\C^N$ of $\pi$ such that
$$\pi|_{\Sigma\setminus \pi^{-1}(B)}: \Sigma\setminus \pi^{-1}(B)\longrightarrow \C^N\setminus B$$
is a locally trivial fibration.

Since $\dim B<N$ and $\Sigma$ is connected, it follows from   \cite[p.68]{Mu} that both $\Sigma\setminus \pi^{-1}(B)$ and $\C^N\setminus B$ are connected. This means $\pi|_{\Sigma\setminus \pi^{-1}(B)}$ is a locally trivial fibration with connected base and connected total space, therefore its fibers are also connected. Hence, the generic fiber of $\pi$ is a smooth complex algebraic set which is connected, then it is irreducible. The proof is complete.

\end{proof}

\begin{lemma}\label{lm2}
Let $X=\{x\in \mathbb{R}^n : f_1(x)= \ldots = f_m(x)=0\}, n>m$ where $f_1, \ldots, f_m$ are non-constant polynomials such that their Newton polytopes $P_1, \ldots, P_m$ contain the origin. Then, if $f_j, j=1, \ldots, m$ are generic with respect to their Newton polytopes $P_j$, the complexification $X_{\C}$ of $X$ is not contained in a coordinate hyperplane of $\C^n$. 
\end{lemma}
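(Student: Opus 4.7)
The plan is to argue by contradiction. Suppose that for some $i \in \{1, \ldots, n\}$ the complexification $X_{\C}$ is contained in the coordinate hyperplane $H_i := \{x_i = 0\}$. By Lemma \ref{lm1}, for generic $f_j$ the set $X_{\C}$ is irreducible of dimension $n - m$, so this inclusion would force $X_{\C} = X_{\C} \cap H_i$. The goal is then to show that generically $\dim(X_{\C} \cap H_i) \leq n - m - 1$, which contradicts the equality above. Performing this analysis separately for each $i$ yields $n$ Zariski open dense conditions on the coefficients of $f_j$, whose intersection remains Zariski open and dense.

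For a fixed $i$, introduce the restricted polynomials $\tilde f_j \in \C[x_1, \ldots, x_{i-1}, x_{i+1}, \ldots, x_n]$ obtained from $f_j$ by setting $x_i = 0$. Under the identification $H_i \cong \C^{n-1}$, the set $X_{\C} \cap H_i$ is exactly $\tilde X_{\C} := \{\tilde f_1 = \ldots = \tilde f_m = 0\}$. The support of $\tilde f_j$ is $\{\alpha \in \supp(f_j) : \alpha_i = 0\}$; since $\supp(f_j) \subset \N^n$ and $0 \in P_j$ together force $0 \in \supp(f_j)$, this support still contains the origin, so $\tilde f_j(0) = f_j(0) \neq 0$ for generic $f_j$. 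The universal zero locus construction from the proof of Lemma \ref{lm1} now applies to the tuple $(\tilde f_1, \ldots, \tilde f_m)$, yielding $\dim \tilde X_{\C} \leq (n-1) - m$ for generic choices. The coefficients of $\tilde f_j$ form a coordinate subset of the coefficients of $f_j$, so generic $f_j$ automatically give rise to generic $\tilde f_j$, and the required Zariski open dense condition pulls back through the coordinate projection.

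The main technical care goes into two edge cases. First, it may happen that $\{\alpha \in \supp(f_j) : \alpha_i = 0\} = \{0\}$ for some $j$, in which case $\tilde f_j$ is the nonzero constant $f_j(0)$ and $\tilde X_{\C} = \emptyset$, immediately giving $X_{\C} \not\subset H_i$. Second, when $n = m + 1$, the restricted system has as many equations as variables, so Lemma \ref{lm1} is not directly applicable; nevertheless the proof of that lemma still gives the dimension formula $\dim \tilde \Sigma = (n-1) + N' - m$ for the relevant universal variety, and Chevalley's upper semicontinuity of fiber dimension then yields $\dim \tilde X_{\C} \leq 0 < 1 = \dim X_{\C}$, which is enough. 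The obstacle is essentially bookkeeping: once these two edge cases are disposed of, the uniform dimension drop from $n - m$ to $n - m - 1$ closes the contradiction.
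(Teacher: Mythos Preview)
Your argument is correct and takes a genuinely different route from the paper's. The paper's proof is a two-line ideal-theoretic argument: by Lemma~\ref{lm1} the variety $X_{\C}$ is smooth and irreducible, so if $X_{\C}\subset\{x_i=0\}$ then $x_i$ lies in the ideal $\langle f_1,\ldots,f_m\rangle$ itself (the ideal being radical), and the paper then asserts that this is impossible once the constant terms $f_j(0)$ are chosen generically. Your approach avoids ideal membership entirely and instead performs a dimension count: restricting each $f_j$ to $H_i$ gives a system $\tilde f_1=\cdots=\tilde f_m=0$ in $\C^{n-1}$ whose universal family, by the same construction as in Lemma~\ref{lm1}, is an affine space of dimension $(n-1)+N'-m$; hence the generic fiber has dimension at most $n-1-m$, strictly less than $\dim X_{\C}=n-m$, and the surjective coordinate projection from the full coefficient space to the restricted one pulls this generic condition back.

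What each approach buys: the paper's argument is shorter and conceptually clean, but the final step (``choosing $f_j(0)$ generic we get the contradiction'') is left implicit---the witnesses $g_j$ in a relation $x_i=\sum_j g_j f_j$ depend on \emph{all} the coefficients of the $f_j$, so one still has to argue that the bad locus is proper. Your argument is more explicit and self-contained, handling the edge cases (some $\tilde f_j$ reducing to a nonzero constant, and the boundary case $n=m+1$ where Lemma~\ref{lm1} as stated does not apply to the restricted system) directly. One small terminological point: the bound on the generic fiber dimension follows from the fiber-dimension theorem for a morphism out of an irreducible variety (or from the simple count $\dim\tilde\Sigma-\dim\C^{N'}$ when the projection is dominant, with the empty-fiber case being trivial) rather than from Chevalley's upper semicontinuity per se, but the conclusion you draw is correct.
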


\begin{proof}
It follows from Lemma \ref{lm1} that if $f_j, j=1, \ldots, m$ are generic with respect to their Newton polytopes then $X_{\C}$ is irreducible. Assume that $X_{\C}$ is  contained in some coordinate hyperplane $\{x_i=0\}$ of $\C^n$, it deduces that $x_i$ belongs to the ideal $\langle f_j, j=1,\ldots, m \rangle$ generated by $f_j, j=1,\ldots, m.$ Since $f_j(0)\neq 0$ for all $j$, by choosing  $f_j(0)$ generic  we get the contradiction. The proof is complete.
\end{proof}

\begin{proof}[Proof of Theorem \ref{Th2}]
It implies from Lemma \ref{lm1} and Lemma \ref{lm2} that if $f_j,j=1,\ldots, m$ are generic with respect to their Newton polytopes  $X_{\C}$ is smooth and is not contained in any coordinate hyperplane. Hence, it follows from the proof of the Theorem \ref{Th1} that the Euclidean distance degree $\EDdeg(X)$ of $X$  is equal to the number of points $(x, \lambda)\in (\C^{\times})^n\times (\C^{\times})^m$ satisfying the system \eqref{1.2} for a generic point $u\in\C^n$.
Therefore, by applying the Bernstein's Other Theorem \ref{Ber2} and the following theorem, we get the equality \eqref{equality2}.

\end{proof}

\begin{thm}\label{facefunction} Suppose that $f_1, \ldots, f_m$ are generic with respect to their Newton polytopes $P_1, \ldots, P_m$ with $0\in P_j$ for all $j=1, \ldots, m$ and $u\in \C^n$ is generic. Then, for all nonzero $w\in \Z^{n+m}$, the system of face functions 
\begin{equation}
(f_1)_w=\cdots= (f_m)_w=0, \left(x-u+\sum_{j=1}^m \lambda_j \operatorname{grad} f_j(x)\right)_w=0
\end{equation}
 of the system \eqref{1.2} has no solution in $(\C^{\times})^n\times (\C^{\times})^m$.
\end{thm}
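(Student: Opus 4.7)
The plan rests on the $\C^\times$-symmetry of the facial system. For $w = (p, q) \in \Z^n \times \Z^m$, consider the one-parameter subgroup $\rho_w: \C^\times \to (\C^\times)^{n+m}$, $t \mapsto (t^{p_1}, \ldots, t^{p_n}, t^{q_1}, \ldots, t^{q_m})$. By the Euler relation~\eqref{eulerrelation}, each $(f_j)_w = (f_j)_p$ is $\rho_w$-homogeneous of weight $d(p, \Gamma(f_j))$. Setting $d_i := d(w, \Gamma(g_i))$ for $g_i := x_i - u_i + \sum_j \lambda_j \partial_i f_j(x)$ and observing that the constant monomial $-u_i$ (with $u_i \neq 0$ generically) belongs to $(g_i)_w$ exactly when $d_i = 0$, a direct computation gives
\[
(g_i)_w(\rho_w(t) \cdot (x, \lambda)) = \begin{cases} t^{d_i}\,(g_i)_w(x, \lambda), & d_i \neq 0, \\ (g_i)_w(x, \lambda), & d_i = 0, \end{cases}
\]
so the zero locus of $(g_i)_w$ is $\rho_w$-invariant in either case (the special treatment for $d_i = 0$ accounts for the non-scaling of the $-u_i$ term, which is only present precisely when $d_i = 0$).

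Consider the incidence variety
\[
V_w := \{(u, x, \lambda) \in \C^n \times (\C^\times)^{n+m} : (f_j)_p(x) = 0 \ \forall j,\ (g_i)_w(x, \lambda, u) = 0 \ \forall i\},
\]
and extend the $\rho_w$-action to $V_w$ by acting trivially on $u$. Then $V_w$ is $\rho_w$-invariant, and since $w \neq 0$ with all torus coordinates of $(x, \lambda)$ nonzero, the stabilizer of any point is finite, so every $\rho_w$-orbit in $V_w$ is one-dimensional. The projection $\psi_w: V_w \to \C^n$, $(u, x, \lambda) \mapsto u$, is $\rho_w$-invariant, hence factors through the quotient $V_w/\C^\times$ of dimension $\dim V_w - 1$. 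For generic $(u, f_j)$, the $m + n$ defining equations form a complete intersection in the $(2n + m)$-dimensional ambient space, giving $\dim V_w = n$ (equivalently, the Gr\"obner/initial degeneration of the ideal $\langle f_j, g_i\rangle$ with respect to $w$ preserves Krull dimension for generic coefficients). Consequently $\dim \psi_w(V_w) \leq n - 1$, so $\psi_w(V_w)$ is a proper algebraic subvariety of $\C^n$.

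Any torus solution $(x^*, \lambda^*)$ of the facial system for parameter $u$ yields a point $(u, x^*, \lambda^*) \in V_w$, forcing $u \in \psi_w(V_w)$. Since the face polynomials $(f_j)_w$ and $(g_i)_w$ depend on $w$ only through the cones of the normal fan of the Minkowski sum $\sum_j \Gamma(f_j) + \sum_i \Gamma(g_i) \subset \R^{n+m}$, only finitely many distinct facial systems arise. The union of the corresponding bad loci $\psi_w(V_w)$ across all nonzero $w$ is a finite union of proper subvarieties of $\C^n$, still proper, and is avoided by generic $u$. The main technical obstacle is establishing $\dim V_w = n$ for generic coefficients---i.e., the complete-intersection / Gr\"obner-basis property of $\{f_j, g_i\}$ with respect to each of the finitely many $w$ that yield distinct facial systems---which is a standard consequence of generic transversality but needs explicit confirmation.
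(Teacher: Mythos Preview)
Your torus-symmetry idea is attractive, but the proposal has a genuine gap at exactly the point you flag as ``needs explicit confirmation'': the bound $\dim V_w \leq n$. Your justification via Gr\"obner degeneration goes the wrong way. Writing $I=\langle f_j,g_i\rangle$, one has $\langle \text{in}_w f_j,\text{in}_w g_i\rangle\subseteq\text{in}_w(I)$, hence $V_w\supseteq V(\text{in}_w I)$ and therefore $\dim V_w\geq\dim V(\text{in}_w I)=\dim V(I)=n$. To get $\dim V_w\leq n$ you would need $\{f_j,g_i\}$ to be a Gr\"obner basis for every relevant weight $w$, i.e.\ a universal Gr\"obner basis, and there is no reason for that to hold here. ``Generic transversality'' does not help either: the coefficients of $(g_i)_w$ are \emph{not} free parameters but are determined linearly by those of the $f_j$ (through $\partial_i f_j$), so the facial equations are far from independently generic.

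Worse, in many directions $w$ the claim $\dim V_w\leq n$ is \emph{equivalent} to the statement you are trying to prove. Whenever every $d_i<0$ (so no $-u_i$ survives in any $(g_i)_w$), the variety $V_w$ equals $\C^n\times W$ with $W\subset(\C^\times)^{n+m}$ independent of $u$, and $\psi_w(V_w)$ is either all of $\C^n$ or empty according to whether $W$ is nonempty or empty. Your $\rho_w$-action only tells you that a nonempty $W$ has dimension $\geq 1$, giving $\dim V_w\geq n+1$; it does \emph{not} force $W=\emptyset$. Showing $W=\emptyset$ in those cases is precisely the content of the theorem, and the paper handles it by invoking Khovanskii non-degeneracy of $(f_1,\ldots,f_m)$ together with a transversality argument against an auxiliary quadric $\sum_{i\in M}x_i^2$ obtained from the Euler relation. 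None of this machinery appears in your sketch. The paper's proof is a detailed seven-case analysis of the possible shapes of $(g_i)_w$, using non-degeneracy, the Euler relation, and genericity of $u$ separately in each branch; your symmetry shortcut, as written, assumes away the hardest branches.
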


\begin{proof}

Let $w=\left(w_1, \ldots, w_n, v_1, \ldots, v_m\right) \neq  0 \in \mathbb{R}^n \times \mathbb{R}^m$ arbitrary, we need to show that when $f_j$'s are generic with respect to their Newton polytopes and the vector $u=(u_1, \ldots,u_n)\in \C^n$ is generic, the system
\begin{equation}\label{facesystem}
\left\{\begin{array}{l}
\left(f_1\right)_w(x)=\ldots=\left(f_m\right)_w(x)=0 \\
\left(x_i-u_i+\sum_{j=1}^m \lambda_j \partial_i\left[f_j(x)\right]\right)_w=0, i=\overline{1,n},
\end{array}\right.
\end{equation}
has no solution in $\left(\mathbb{C}^\times\right)^n \times\left(\mathbb{C}^\times\right)^m$.

For $j\in\{1,\ldots,  m\} $ and $i\in\{1,\ldots,  n\}$, let $P_j$ be the Newton polytopes of $f_j(x) $, and
$P_j^i$ be the Newton polytopes of $\partial_i[f_j]:=\frac{\partial}{\partial x_i}\left(f_j\right)$. We regard $P_j$ and $P_j^i$ as polytopes in  $\mathbb{R}^n \times \mathbb{R}^m$. Set
$h_j=\min _{a \in P_j} \langle w, a\rangle, h_j^i=\min _{a \in P_j^i} \langle w, a\rangle.$ Because the Newton polytope $P_j$ of $f_j$ contains the origin, we see that 
\begin{equation}\label{ordernegative}
h_j\leq 0.
\end{equation}
Recall that $ \left(P_j\right)_w:= \Delta(w, P_j)$  and $\left(P_j^i\right)_w:= \Delta(w, P^i_j)$  are the faces  of $P_j$ and $P_j^i$ defined by the vector $w$.
 Also,  put 
$e_i:=\min_{j\in\{1,\ldots, m\}} \left\{v_j+h_j^i\right\}$
and 
$$S_i=\lbrace k \in\{1, \ldots, m\} \mid v_k+h_k^i=e_i\rbrace.$$
We also set $e:=\min _{j\in\{1,\ldots, m\}}\left\{v_j+h_j\right\}$ and
 $S:=\left\{k \in\{1, \ldots, m\} \mid v_k+h_k=e\right\}$.

There are $7$ cases for the face functions in \eqref{facefunction}:

$ H_i:=\left(x_i-u_i+\sum_{j=1}^m \lambda_j \partial_i\left[f_j(x)\right]\right)_w= $

\begin{numcases}{}
-u_i & $\text { if } 0<w_i ; 0<e_i$,\label{1}
\\ x_i & $\text { if } w_i<\min \left\{0, e_i\right\}$, \label{2}
\\ \sum_{k \in S_i} \lambda_k\left[\partial_i\left(f_k\right)\right]_w & $\text { if } e_i<\min \left\{0, w_i\right\} $,\label{3}
\\ x_i-u_i & $\text { if } w_i=0<e_i$, \label{4}
\\ -u_i+\sum_{k \in S_i} \lambda_k\left[\partial_i\left(f_k\right)\right]_w & $\text { if } 0=e_i<w_i $,\label{5}
\\ x_i+\sum_{k \in S_i} \lambda_k\left[\partial_i\left(f_k\right)\right]_w & $\text { if } w_i=e_i<0$,\label{6}
\\ x_i-u_i+\sum_{k \in S_i} \lambda_k\left[\partial_i\left(f_k\right)\right]_w & $\text { if } w_i=e_i=0$.\label{7}
\end{numcases}
Assume by contraposition that the system of face functions \eqref{facesystem} has solution in $(\C^{\times})^n\times (\C^{\times})^m$. Then by choosing $u_i\neq 0$ for all $i$ we can skip the cases \eqref{1} and \eqref{2}.

For each $i \in\{1, \ldots, n\}, j \in\{1, \ldots, m\}$, according to \cite[Lemma 8]{BSW}, we have 
\begin{equation}\label{inequalityhij}
h_j^i\geq h_j-w_i,
\end{equation}
 then 
$$v_j+h_j^i\geq v_j+h_j-w_i \geq e-w_i,$$
the equality occurs if and only if $\partial_i\left(f_j\right)_w \neq 0$ and  $j \in S$. In that case, we also have $e_i=e-w_i$. 
Otherwise, if $\partial_i\left(f_j\right)_w=0$ then $h_j^i>h_j-w_i$ and $v_j+h_j>e-w_i.$

There are two cases.

\textbf{Case 1}: There exists $j \in\{1, \ldots, m\}$ such that $\partial_i\left(f_j\right)_w=0$ for all $i\in \{1,\ldots, n\}$. This means $\left(f_j\right)_w$ is the constant term of $f_j$. Hence when $f_j$ is general, we get $\left(f_j\right)_w \neq 0$. Thus, the system of face functions \eqref{facesystem} has no solution in $(\C^{\times})^n\times (\C^{\times})^m$.

\textbf{Case 2}: For each $j \in\{1, \ldots, m\},\left(f_j\right)_\omega$ is not a constant, this means there exists $i$ such that $\partial_i\left(f_j\right)_w \neq 0$.
Let $I:=\left\{i \in\{1, \ldots, n\} \mid \exists j \in S: \partial_i\left(f_j\right)_w \neq 0\right\}$, it is obvious that $I \neq \emptyset$. Moreover, it implies from the above observation that for any $i\in I$ one has 
\begin{equation}\label{formulare}
e_i=e-w_i.
\end{equation}
 Also, for  $k \in S_i$ we have $$e_i=v_k+h_k^i=\min _{j\in \{1,\ldots,  m\}}\left(v_j+h_j^i\right)=e-w_i,$$ 
so
$h_k^i=e-w_i-v_k=h_k-w_i$, by \cite[Lemma 8]{BSW}, we get $\partial_i\left(f_k\right)_w \neq 0$ and $\left[\partial_i\left(f_k\right)\right]_w=\partial_i\left(f_k\right)_w$. Hence, we have $S_i=\{j\in S: \partial_i\left(f_j\right)_w \neq 0\}$. 

\textit{Case 2.1}: There exists $s \in I$ such that $w_s<0$.
Let $s \in I$ be such an element, it means $w_s<0$. Then, for such element $s$ either \eqref{3} or \eqref{6} occurs. In particular $e_s<0,$ it implies that $ e<w_s<0.$ Therefore, for any $i \in I$, we obtain that $e_i=e-w_i<-w_i.$ So if $w_i\geq 0$, we also have $e_i<0$. Thus $e_i<0$ for all $i \in I$. This means, for any $i \in I$ either \eqref{3} or \eqref{6} occurs.

Let
\begin{equation*}
 K:=\left\{i \in I \mid e_i<\min \left\{0, w_i\right\}\right\}, 
 M:=\left\{i \in I \mid \quad w_i=e_i<0\right\}.
\end{equation*}
For $i\in M$, we have $e_i=e-w_i$ and $w_i=e_i$ so $w_i=e/2.$
If $M=\emptyset$ then $K=I$.
This mean, for any $i \in I$ then 
$$\left(x_i-u_i+\sum_{j=1}^m \lambda_j \partial_i\left(f_j\right)\right)_w
=\sum_{k \in S_i} \lambda_k\left[\partial_i\left(f_k\right)\right]_w=\sum_{k \in S} \lambda_k \partial_i\left(f_k\right)_w.
$$
However, by the definition of $I$, if  $l\notin I$, for any $k\in S$ we get $\partial_l\left(f_k\right)_w=0$. In other words,  $\left(f_k\right)_w \in \mathbb{C}\left[x_I\right]$ for all $k \in S$, where $x_I=(x_i, i\in I)$. Hence,  the system \eqref{facesystem} has solution in $(\C^{\times})^n\times (\C^{\times})^m$ implying that the tupe $(f_k, k\in S)$ is degenerate, this is not the case if we choose the polynomials $f_j$ generic with respect to their Newton polytopes (by Lemma \ref{nondegenerategeneric}).

If $M \neq \emptyset$,  for $i \in K$, we have
\begin{equation}\label{setK}
H_i=\left(x_i-u_i+\sum_{j=1}^m \lambda_j\left[\partial_i\left(f_j\right)\right]\right)_w=\sum_{k \in S_i} \lambda_k\left[\partial_i\left(f_k\right)\right]_w=\sum_{k \in S} \lambda_k \partial_i\left(f_k\right)_w
\end{equation}
and for $i \in M$ we have
\begin{align}\label{setM}
 H_i=\left(x_i-u_i+\sum_{j=1}^m \lambda_j\left[\partial_i\left(f_j\right)\right]\right)_w=x_i+\sum_{k \in S_i} \lambda_k\left[\partial_i\left(f_k\right)\right]_w =x_i+\sum_{k \in S} \lambda_k \partial_i\left(f_k\right)_w.
 \end{align}
 For $k \in S$, by the Euler relation \eqref{eulerrelation} we have 
 $$h_k\left(f_k\right)_w=\sum_{i \in I} w_i x_i \partial_i\left(f_k\right)_w$$
  (remark that  $\left(f_k\right)_w \in \mathbb{C}\left[x_I\right]$ by the previous observation).
 Multiplying $w_i x_i$ into functions \eqref{setK} and \eqref{setM}, it implies from the system of face functions \eqref{facesystem} that
$$
\begin{aligned}
& \sum_{k \in S} \lambda_k  w_i  x_i  \partial_i\left(f_k\right)_w=0 \text {, for } i \in K, \\
& \sum_{k \in S} \lambda_k  w_i x_i  \partial_i\left(f_k\right)_w+w_i  x_i^2=0 \text {, for } i \in M .
\end{aligned}
$$
Take the sum over all $i \in I$, we obtain
$$
\sum_{i \in M}w_i  x_i^2+\sum_{k \in S} h_k\left(f_k\right)_w=0 .
$$
Since $\left(f_k\right)_w=0$ we get
 $\sum_{i \in M}w_i x_i^2=0.$
Moreover, $w_i=\frac{e}{2}<0$ for $i \in M$ so $\sum_{i \in M} x_i^2=0.$
Denote by $Q(x_I)$ the quadratic polynomial $Q(x_I):=\sum_{i \in M} x_i^2.$

Thus, if the system \eqref{facesystem} has solution in $\left(\mathbb{C}^\times\right)^n \times\left(\mathbb{C}^\times\right)^m$ then the vectors  $\operatorname{grad} Q\left(x_{I}\right)$ and $\grad\left(f_k\right)_w(k \in S)$ are linearly dependent.
It means that the intersection of $\left\{Q\left(x_I\right)=0\right\}$ and $\left\{\left(f_k\right)_w(x)=0, k \in S\right\}$ is not transversal.
This is not true when $f_j$ are general.

 \textit{Case 2.2}: For any $i \in I, w_i \geq 0$. For $k \in S$, since $(f_k)_w\in \C[x_I]$, if the vector $a=(a_1, \ldots, a_n)$ belongs to the Newton polytope $ (P_k)_w$ of $(f_k)_w$ we have $a_i=0$ for $i\notin I$. Then 
 $$0\geq h_k= \langle w, a\rangle= \langle w_I, a_I\rangle\geq 0$$ for some $a\in (P_k)_w$. Therefore 
\begin{equation}\label{hk}
 h_k=0.
\end{equation} 
  By the same argument as in the proof of Theorem 7 in \cite[Page 13, Case 2]{BSW}, we also get that $w_i=0$ for all $i\in I$.
So $ h_k^i=e-w_i-v_k=h_k-w_i=0.$
This implies that the only possibilities are \eqref{3}, \eqref{4}, \eqref{7}. Remark that, due to \eqref{formulare}, for $i\in I$, the cases \eqref{4} and \eqref{7} can not occur simultaneously. We consider the following cases.

i). For some $i \in I$ we have \eqref{3}.
Then $e_i<\min \left\{0, w_i\right\}$, by \eqref{formulare} we get  $e_i=e-w_i=e$
so $e_i<0$ and $e<0$.
It deduces that for all $i \in I, e_i=e<0$. This means \eqref{3} occurs for all $i\in I$. Then we have the face function 
$$H_i=\sum_{k \in S_i} \lambda_k\left[\partial_i\left(f_k\right)\right]_w=\sum_{k \in S} \lambda_k \partial_i\left(f_k\right)_w.$$
If the system \eqref{facesystem} has solution in $\left(\mathbb{C}^\times\right)^n \times\left(\mathbb{C}^\times\right)^m$ then the tupe $((f_k)_w, k \in S)$ is  degenerate. This will not happens once $f_j$ are general.

ii). For all $ i \in I$ we have \eqref{4}.
Then $x_i=u_i$ and  $\left(f_k\right)_w(x)=0$. Remark that $ \left(f_k\right)_w \in \mathbb{C}\left[x_I\right].$
Thus $\left(f_k\right)_w\left(u_I\right)=0 \quad(k \in S)$.
This is not true once we choose $u$ generic.

iii). For all $i \in I$ we have \eqref{7}. It means $w_i=e_i=0$, combining with \eqref{formulare} one obtains that 
\begin{equation}\label{e=0}
e=0.
\end{equation}
In addition, for $k\in S$ we have $e=v_k+h_k$, combining with \eqref{hk} one gets $v_k=0.$ Therefore 
\begin{equation}\label{S}
S=\{k\in \{1, \ldots, m\} \mid h_k=v_k=0\}.
\end{equation} 

Furthermore, for all $ i \in I$ we have \eqref{7} leading that $$H_i=x_i-u_i+\sum_{k \in S} \lambda_k  \partial_i\left(f_k\right)_w.$$
Thus, we have a subsystem:
\begin{align}\label{subsystem}
\left\{\begin{array}{l}
\left(f_k\right)_w=0, k \in S, \\
x_i-u_i+\sum_{k \in S} \lambda_k \partial_i\left(f_k\right)_w=0, i \in I .
\end{array}\right.
\end{align}
This is a system in variables $x_i,i\in I$, its solutions are critical points of the distance function from $u_I=(u_i, i\in I)$ to the set $\left\{x_I:\left(f_k\right)_w\left(x_I\right)=0, k \in S\right\}$. To solve \eqref{facesystem}, we first solve \eqref{subsystem} and then consider the face functions $H_i$ for $i\notin I$.

Write 
$$ \{1, \ldots, n\} = I \sqcup J.$$
 For  $j \in \{1,\ldots, m\}$ and $ i \in J$, by \eqref{inequalityhij} we get $$v_j+h_j^i \geq v_j+h_j-w_i \geq e-w_i=-w_i,$$
the equality does not hold due to $i\notin I$. It implies that $ e_i>-w_i$ for all $i \in J$.
Thus, for  $i \in J$, the only possibilities are \eqref{3},\eqref{4},\eqref{5}. We consider two cases as follows.

 iii-a). For some $i \in J$ we have \eqref{3}. For such element $i$,  $e_i<\min \left\{0, w_i\right\}$, combining this with the fact $ e_i>-w_i$, one obtains that $w_i>0$. Thus $w_i \geq 0$  for all $i \in J$.
In particular $$h_j^i=\min _{a \in P_j^i} \langle w, a\rangle \geq 0\quad \textrm{for all}\quad j \in \{1,\ldots, m\}.$$
Take one element $i_0 \in J$ that \eqref{3} holds, i.e. $e_{i_0}<\min \left\{0, w_{i_0}\right\}$.
For $j \in S_{i_0}$, we have $0> e_{i_0}=v_j+h_j^{i_0} \geq v_j$. Consequently $h_j>v_j+h_j \geq e=0,$ contradicts to \eqref{ordernegative}. Thus \eqref{3} does not hold for any $i \in J$.

iii-b). For any $i \in J$, either \eqref{4} or \eqref{5} holds. Denote $K:=\left\{k \in J \mid w_k=0\right\}$ and $M:=\left\{m \in J \mid w_m>0\right\}.$ 
If $M=\emptyset$ then $w_i=0$ for all $i=1, \ldots, n$ implying that $h_j^i= h_j=0$ for all $j\in \{1,\ldots,  m\}$, all $i\in\{1, \ldots, n\}$. In particular, for any $i\in J$, the case \eqref{4} occurs and we  have $e_i>0$. In addition, according to \eqref{e=0}, we have
$$e_i=\min _{j=\overline{1, m}}\left\{v_j+h_j^i\right\}=\min _{j= \overline{1, m}} v_j=\min _{j=\overline{1, m}}\left\{v_j+h_j\right\}=e=0.$$
This is a contradiction. Hence $M \neq \emptyset$.

 Let $m \in M,$ then $w_m>0$, by \eqref{5} we get $e_m=0$.
For each $j \in S_m$, we have 
$e_m=v_j+h_j^m$  and $v_j \geq v_j+h_j \geq e= 0$ (due to \eqref{ordernegative} and \eqref{e=0}).
Since $w_i\geq 0$ for all $i=1, \ldots, n$, we obtain that 
$h_j^m=\min _{a \in P_j^m}\langle w, a\rangle \geq 0$, hence $v_j \leq v_j+h_j^m=e_m=0.$
Therefore, 
$$v_j= h_j=h_j^m=0\quad\textrm{and}\quad j\in S.$$
Thus,
$h_j^m=\langle w, a\rangle=0$ for any $ a \in\left(P_j^m\right)_w,$ combining this with  the fact  $w_i>0$ for all $i\in M$, one obtains that $a_i=0$ for any $i\in M$ and any  $ a \in\left(P_j^m\right)_w.$
It means that $\left(\partial_m\left(f_j\right)\right)_w$ does not depend on $x_i$ for $i \in M$. 

Now, one can rewrite part of the system \eqref{facesystem}  for $i\in J$ as follows.
 \begin{align}\label{subsystem2}
\left\{\begin{array}{l}
x_i-u_i=0, i \in K, \\
H_i=u_i-\sum_{k \in S_i} \lambda_k \left[\partial_i\left(f_k\right)\right]_w=0, i \in M.
\end{array}\right.
\end{align}
Let $(x_I, \lambda_k, k\in S)$ be a solution to the system \eqref{subsystem} for critical points of the distance function from $u_I$ to the set $\left\{x_I:\left(f_k\right)_w\left(x_I\right)=0, k \in S\right\}$. From the system \eqref{subsystem2}, for $i\in K$ we get $x_i=u_i$, for $i\in M$ the functions $H_i$ depend on $x_i$, $i\in I\sqcup K$ and $\lambda_k, k\in S_i\subset S$. For each $m\in M$, substituting these values into the polynomial $\sum_{k \in S_m} \lambda_k \left[\partial_m\left(f_k\right)\right]_w$ gives a constant which is not equal to $u_m$ for generic $u_m$. This completes the proof.

\end{proof}


\begin{thebibliography}	{99}

\bibitem{AH}
P. Aluffi and C. Harris, {\it The Euclidean distance degree of smooth complex projective varieties}, Algebra
Number Theory 12 (2018), no. 8, 2005–2032.

\bibitem{Bernstein} D. N. Bernstein, {\it The number of roots of a system of equations}, Funkcional. Anal. i Prilozen. 9 (1975),
no. 3, 1-4.


\bibitem{BSW} P. Breiding, F. Sottile, J. Woodcock, \textit {Euclidean distance degree and mixed volume}
Found. Comput. Math. 22, no. 6 (2022), 1743–1765.

\bibitem{DHOST}
J. Draisma, E. Horobet¸, G. Ottaviani, B. Sturmfels and R. R. Thomas, {\it The Euclidean distance degree of an
algebraic variety}. Found. Comput. Math., 16 (1)(2016), 99–149.


\bibitem{Ewald} G. Ewald, {\it Combinatorial convexity and algebraic geometry}, Graduate Texts in Mathematics, vol. 168,
Springer-Verlag, New York, 1996.


\bibitem{Khovanskii} A. G. Khovanskii, {\it Newton polyhedra and toroidal varieties}, Funct. Anal. Appl. 11 (1978), 289–296.

\bibitem{Kouchnirenko} A.G. Kouchnirenko, {\it Polyhedres de Newton et nombre de Milnor}. Invent. Math. 32 (1976), 1-31.


\bibitem{LMR} J. Lindberg, L. Monin, K. Rose, {\it The algebraic degree of sparse polynomial optimization}, arXiv:2308.07765.

\bibitem{MRW}
L. G. Maxim, J. I. Rodriguez and B. Wang, {\it Euclidean Distance Degree of the Multiview Variety}. SIAM J.
Appl. Algebra Geom., 4 (1)(2020), 28–48. 

\bibitem{Mu} D. Mumford, {\it Algebraic Geometry I, Complex Projective Varieties},
Grundlehren der mathematischen Wissenschaften 221, SpringerVerlag, Berlin, Heidelberg, New York, 1976.

\bibitem{Oka1} M. Oka, {\it Principal zeta-function of non-degenerate complete intersection singularity}. J. Fac. Sci. Univ. Tokyo 37 (1990), 11-32.

\bibitem{Oka2} M. Oka, {\it Non-Degenerate Complete Intersection Singularity}. Actualit'es Math'ematiques Hermann, Paris (1997).


\bibitem{S} A. Seidenberg, {\it A new decision method for elementary algebra}, Annals of Mathematics 60 , no. 2(1954),
365–374.


\bibitem{Th} R. Thom, {\it Ensembles et morphismes stratifies}, Bull. Amer. Math. Soc. 75 (1969), 240-284.

\bibitem{Ve} J.-L. Verdier, {\it Stratifications de Whitney et theoreme de Bertini-Sard}, Invent. Math. 36
(1976), 295-312.

\end{thebibliography}
\end{document}